\documentclass[12pt]{article}
\usepackage{amsmath, amsthm}
\usepackage{times}
\usepackage{newtxmath}
\usepackage[margin=1.0in]{geometry}

\newtheorem{theorem}{Theorem}
\newtheorem{remark}{Remark}

\theoremstyle{definition}
\newtheorem{definition}{Definition}
\newtheorem{reference}{Reference Summarization}

\begin{document}

\title{Generalized Motives through Witt Vectors}
\author{Xin Tong}
\date{}

\maketitle

\begin{abstract}
\noindent We generalize condensed motives, through Witt vectors and the associated moduli stacks of generalized line bundles.

\end{abstract}

\newpage
\tableofcontents

\newpage

\begin{reference}
We follow \cite{G} on the corresponding motives. We start from the foundation in \cite{BSI}, \cite{BLI}, \cite{DI}, \cite{SchI}, \cite{ALBRCS}, \cite{TI}, \cite{TII}, \cite{TIII}, \cite{TVI}. For condensed mathematics consideration see \cite{CS1}, \cite{CS2}, \cite{CS3}. This paper is also closely following and inspired by \cite{TIV} and \cite{TV}. We apply the generalization then to the level of consideration closely after \cite{Ta}, \cite{FI}, \cite{KLI}, \cite{KLII}, \cite{SchII}, \cite{SchIII}, \cite{SchIV}, as well as closely after \cite{L}, \cite{FS}, \cite{DHKM}, \cite{VL}, \cite{LL}, \cite{EGH}, \cite{DIII}, \cite{DII}, \cite{Z}, \cite{GL}. Witt vectors are very significant in this paper since all the corresponding prismatizations are defined to be the Cartier moduli stacks parametrizing those divisors living in the Witt vector spaces, in either perfect setting (see \cite{SchIII}, \cite{SchII}, \cite{KLI}, \cite{KLII} as well for more inspiration and development) and the imperfect setting. Prismatization will mean taking the spectrum (up to the Cartier primes) of the Witt vectors, while perfect prismatization relates to stacks of untilts such as in the work of Scholze and Kedlaya-Liu in \cite{SchIII}, \cite{SchII}, \cite{KLI}, \cite{KLII}. By Witt vectors we will mean the most generalized version such as in \cite{KLI}, \cite{KLII}, \cite{FS}, which allow us to consider the $z$-adic setting, for instance the Fontaine-Wintenberger parametrization of the Cartier divisors is considered extensively in the $z$-adic setting in \cite{KLI} and \cite{KLII}.
\end{reference}

\indent This paper generalizes motives. Motives are universalization of cohomologies after \cite{G}, therefore generalization of universal cohomology theory is a completely mysterious and sophisticated consideration. Extending the original idea on the motives can be very tricky. What we considered here is the so called mixed-parity filtred generalization of the usual motives after \cite{BSB}. Even in the situation where we do not consider Galois representations, this can be for instance realized by using generalized Langlands program. Representation of Galois groups can be regarded as certain sheaves over more generalized Hodge theoretic spaces and even fiber categories. For any $p$-adic formal scheme we generalize the corresponding prismatization from \cite{BSI}, \cite{BLI} and \cite{DI}. We also have the $z$-adic version as well the corrsponding $v$-stack version, where the idea is to consider certain generalized paramatrization stacks of Cartier sheaves, extensively by using the Witt vectors attached to the field $T$. We then apply this construction to rigid analytic space $R$ by using the $v$-stack consideration, which is completely compatible with \cite{TI}, \cite{TII}, \cite{TIII}, \cite{SchI}, \cite{ALBRCS}. This immediately allows us to generalize the $z$-adic cohomology theory after \cite{KLI}, \cite{KLII}, \cite{SchII}, \cite{SchIII}, \cite{SchIV}. Finally we consider the generalized Langlands program after \cite{TIV}, \cite{TV}, \cite{TVI}, \cite{L}, \cite{FS}, \cite{VL}, \cite{DII}, \cite{DIII}. See the final chapter.

\section{$p$-adic Motives}
\subsection{Generalized Prismatization}
We start from the foundation in \cite{BSI}, \cite{BLI}, \cite{DI}, \cite{SchI}, \cite{ALBRCS}, \cite{TI}, \cite{TII}, \cite{TIII}, \cite{TVI}. We start from the corresponding formal scheme situation. We will start from a finite extension $T$ of $\mathbb{Q}_p$. We consider a general $F$ which is a $p$-adic formal scheme over $\mathcal{O}_T$. Attached to $F$ we have the corresponding primatic stack:
\begin{align}
\mathrm{\Gamma}_{\mathrm{prismatization},F}
\end{align}  
which is defined actually over rings where $p$ is nilpotent. We use the notation:
\begin{align}
\mathrm{C}_{p\mathrm{nil}} 
\end{align}
to be this category. Therefore the stack
\begin{align}
\mathrm{\Gamma}_{\mathrm{prismatization},F}
\end{align} 
is then defined over this category as the family of Cartier divisors (\textit{closed immersingly}) living in the corresponding generalized Witt vector of any ring $H$ in the under category:
\begin{align}
?\rightarrow \mathrm{WVector}_{\mathcal{O}_T}(H).
\end{align}
This stack is actually formal stack. One then have the corresponding quasicoherent sheaves over this stack as the corresponding $\infty$-category of all the quasicoherent modules over the structure sheaf:
\begin{align}
\mathrm{Quasicoherent}_{\mathrm{\Gamma}_{\mathrm{prismatization},F}}
\end{align}
which can be presented as the inverse limit of certain $\infty$-category of all the quasicoherent modules over a family of prisms in coherent way:
\begin{align}
\mathrm{Quasicoherent}_{\mathrm{\Gamma}_{\mathrm{prismatization},F}}=\projlim_i \mathrm{Quasicoherent}_{\mathrm{Spec}U_i}
\end{align}
where $U_i$ is the desired family of such sort of prisms\footnote{In fact one can care more about the quasicoherent sheaves instead of the actual stacks. This point of view is quite well-known in the field of algebraic geometry where even in certain situation the underlying stack can be reconstructed from the corresponding category of quasicoherent sheaves.}. This presentation can be directly use to construct the corresponding de Rham stacks for instance:
\begin{align}
\mathrm{\Gamma}_{\mathrm{prismatization},\mathrm{deRham},F}:=\injlim_i \mathrm{Spec}U_i[1/p]_{I_{U_i}}
\end{align}
with the corresponding quasicoherent sheaves category:
\begin{align}
\mathrm{Quasicoherent}_{\mathrm{\Gamma}_{\mathrm{prismatization},\mathrm{deRham},F}}=\projlim_i \mathrm{Quasicoherent}_{\mathrm{Spec}U_i[1/p]_{I_{U_i}}}.
\end{align}
One then have the analytification version of this stack:
\begin{align}
\mathrm{\Gamma}^\sharp_{\mathrm{prismatization},\mathrm{deRham},F}:=\injlim_i \mathrm{Spec}^\sharp U_i[1/p]_{I_{U_i}}
\end{align}
with the corresponding quasicoherent sheaves category:
\begin{align}
\mathrm{Quasicoherent}^\sharp_{\mathrm{\Gamma}_{\mathrm{prismatization},\mathrm{deRham},F}}=\projlim_i \mathrm{Quasicoherent}^\sharp_{\mathrm{Spec}U_i[1/p]_{I_{U_i}}}.
\end{align}
We then have the corresponding cristalline version as well. This presentation can be directly use to construct the corresponding cristalline stacks for instance:
\begin{align}
\mathrm{\Gamma}_{\mathrm{prismatization},\mathrm{cristalline},F}:=\injlim_i \mathrm{Spec}U_i[1/p]_{I_{U_i}}
\end{align}
with the corresponding quasicoherent sheaves category:
\begin{align}
\mathrm{Quasicoherent}_{\mathrm{\Gamma}_{\mathrm{prismatization},\mathrm{cristalline},F}}=\projlim_i \mathrm{Quasicoherent}_{\mathrm{Spec}U_i[1/p]_{I_{U_i}}}.
\end{align}
One then have the analytification version of this stack:
\begin{align}
\mathrm{\Gamma}^\sharp_{\mathrm{prismatization},\mathrm{cristalline},F}:=\injlim_i \mathrm{Spec}^\sharp U_i[1/p]_{I_{U_i}}
\end{align}
with the corresponding quasicoherent sheaves category:
\begin{align}
\mathrm{Quasicoherent}^\sharp_{\mathrm{\Gamma}_{\mathrm{prismatization},\mathrm{cristalline},F}}=\projlim_i \mathrm{Quasicoherent}^\sharp_{\mathrm{Spec}U_i[1/p]_{I_{U_i}}}.
\end{align} 

\begin{definition}
We now generalize the corresponding foundation above to \cite{BSB} in the following sense. First starting with the stack:
\begin{align}
\mathrm{\Gamma}_{\mathrm{prismatization},F}
\end{align}
we have the corresponding $p$-adic primitive element $h$ from \cite{BSB} which is usual the corresponding logrithmic with respect to the imperfect Robba ring (in variable $X$, then it will be the log of $1+X$) attached to the field $T$. We then add $h^{1/2}$ to the following map in the definition of the primatization:
\begin{align}
?\rightarrow \mathrm{WVector}_{\mathcal{O}_T}(H)[h^{1/2}].
\end{align}
Then we have the corresponding stack which we will denote it by:
\begin{align}
\mathrm{\Gamma}_{\mathrm{prismatization},F,2}.
\end{align}
Then the de Rham and cristalline situations are generalized accordingly. This stack is actually formal stack. One then has the corresponding quasicoherent sheaves over this stack as the corresponding $\infty$-category of all the quasicoherent modules over the structure sheaf:
\begin{align}
\mathrm{Quasicoherent}_{\mathrm{\Gamma}_{\mathrm{prismatization},F,2}}
\end{align}
which can be presented as the inverse limit of certain $\infty$-category of all the quasicoherent modules over a family of prisms in coherent way:
\begin{align}
\mathrm{Quasicoherent}_{\mathrm{\Gamma}_{\mathrm{prismatization},F,2}}=\projlim_i \mathrm{Quasicoherent}_{\mathrm{Spec}U_i[h^{1/2}]}
\end{align}
where $U_i$ is the desired family of such sort of prisms. This presentation can be directly use to construct the corresponding de Rham stacks for instance:
\begin{align}
\mathrm{\Gamma}_{\mathrm{prismatization},\mathrm{deRham},F,2}:=\injlim_i \mathrm{Spec}U_i[1/p]_{I_{U_i}}[h^{1/2}]
\end{align}
with the corresponding quasicoherent sheaves category:
\begin{align}
\mathrm{Quasicoherent}_{\mathrm{\Gamma}_{\mathrm{prismatization},\mathrm{deRham},F,2}}=\projlim_i \mathrm{Quasicoherent}_{\mathrm{Spec}U_i[1/p]_{I_{U_i}}[h^{1/2}]}.
\end{align}
One then have the analytification version of this stack:
\begin{align}
\mathrm{\Gamma}^\sharp_{\mathrm{prismatization},\mathrm{deRham},F,2}:=\injlim_i \mathrm{Spec}^\sharp U_i[1/p]_{I_{U_i}}[h^{1/2}]
\end{align}
with the corresponding quasicoherent sheaves category:
\begin{align}
\mathrm{Quasicoherent}^\sharp_{\mathrm{\Gamma}_{\mathrm{prismatization},\mathrm{deRham},F,2}}=\projlim_i \mathrm{Quasicoherent}^\sharp_{\mathrm{Spec}U_i[1/p]_{I_{U_i}}[h^{1/2}]}.
\end{align}
We then have the corresponding cristalline version as well. This presentation can be directly use to construct the corresponding cristalline stacks for instance:
\begin{align}
\mathrm{\Gamma}_{\mathrm{prismatization},\mathrm{cristalline},F,2}:=\injlim_i \mathrm{Spec}U_i[1/p][h^{1/2}]
\end{align}
with the corresponding quasicoherent sheaves category:
\begin{align}
\mathrm{Quasicoherent}_{\mathrm{\Gamma}_{\mathrm{prismatization},\mathrm{cristalline},F,2}}=\projlim_i \mathrm{Quasicoherent}_{\mathrm{Spec}U_i[1/p][h^{1/2}]}.
\end{align}
One then have the analytification version of this stack:
\begin{align}
\mathrm{\Gamma}^\sharp_{\mathrm{prismatization},\mathrm{cristalline},F,2}:=\injlim_i \mathrm{Spec}^\sharp U_i[1/p][h^{1/2}]
\end{align}
with the corresponding quasicoherent sheaves category:
\begin{align}
\mathrm{Quasicoherent}^\sharp_{\mathrm{\Gamma}_{\mathrm{prismatization},\mathrm{cristalline},F,2}}=\projlim_i \mathrm{Quasicoherent}^\sharp_{\mathrm{Spec}U_i[1/p][h^{1/2}]}.
\end{align} 
\end{definition}

\begin{theorem}
When $F$ is $\mathcal{O}_T$ we have a generalized condensed prismatization, which is well-defined.
\end{theorem}

\begin{remark}
We remark that in order for us to relate this generalization to Galois representations, one has to use Fargues-Fontaine curves. The reason behind this is on \textit{homotopicalization}. Recall that Fargues-Fontaine curves are encoding the Galois actions. Therefore over $\mathcal{C}_T$ we have the corresponding \textbf{fundamental groups} of FF curves as the corresponding Galois groups of $T$. Therefore in order to have action from two fold covering groups of the Galois groups of $T$ and even in certain relative fashion, one has to use the generalized FF curves by extending the action of the action from Galois groups of $T$ to the extension of these Galois groups, then we have a functor from the corresponding generalized prismatization to the corresponding $G_{T,2}$-equivariant sheaves over generalized FF curves. This parallels to $z$-adic situation as well.
\end{remark}

\subsection{Perfectoid Picture}
\noindent We then consider the contact with the work of Kedlaya-Liu and Scholze, where untilts of perfectoids are parametrized by certain similar stacks, i.e. the Fargues-Fontaine stacks, from \cite{KLI}, \cite{KLII}, \cite{SchII}, \cite{SchIII}, \cite{SchIV}. We now consider any small $v$-stack $S$ over $\mathrm{Spd}\mathcal{O}_T$. For any local chart ${F}$ in the $v$-topology over this stack $S$, we can get the corresponding prismatization directly in a transparent way, namely the corresponding prismatization is just the corresponding space of the Witt vector attached to $F$. However this is a prismatic method. Attached to $F$ we have the corresponding primatic stack:
\begin{align}
\mathrm{\Gamma}_{\mathrm{prismatization},F}
\end{align}  
which is defined actually over rings where $p$ is nilpotent. We use the notation:
\begin{align}
\mathrm{C}_{p\mathrm{nil}} 
\end{align}
to be this category. Therefore the stack
\begin{align}
\mathrm{\Gamma}_{\mathrm{prismatization},F}
\end{align} 
is then defined over this category as the family of Cartier divisors (\textit{closed immersingly}) living in the corresponding generalized Witt vector of any ring $H$ in the under category:
\begin{align}
?\rightarrow \mathrm{WVector}_{\mathcal{O}_T}(H).
\end{align}
This stack is actually formal stack. One then have the corresponding quasicoherent sheaves over this stack as the corresponding $\infty$-category of all the quasicoherent modules over the structure sheaf:
\begin{align}
\mathrm{Quasicoherent}_{\mathrm{\Gamma}_{\mathrm{prismatization},F}}
\end{align}
which can be presented as the inverse limit of certain $\infty$-category of all the quasicoherent modules over a family of prisms in coherent way:
\begin{align}
\mathrm{Quasicoherent}_{\mathrm{\Gamma}_{\mathrm{prismatization},F}}=\projlim_i \mathrm{Quasicoherent}_{\mathrm{Spec}U_i}
\end{align}
where $U_i$ is the desired family of such sort of prisms. This presentation can be directly use to construct the corresponding de Rham stacks for instance:
\begin{align}
\mathrm{\Gamma}_{\mathrm{prismatization},\mathrm{deRham},F}:=\injlim_i \mathrm{Spec}U_i[1/p]_{I_{U_i}}
\end{align}
with the corresponding quasicoherent sheaves category:
\begin{align}
\mathrm{Quasicoherent}_{\mathrm{\Gamma}_{\mathrm{prismatization},\mathrm{deRham},F}}=\projlim_i \mathrm{Quasicoherent}_{\mathrm{Spec}U_i[1/p]_{I_{U_i}}}.
\end{align}
One then have the analytification version of this stack:
\begin{align}
\mathrm{\Gamma}^\sharp_{\mathrm{prismatization},\mathrm{deRham},F}:=\injlim_i \mathrm{Spec}^\sharp U_i[1/p]_{I_{U_i}}
\end{align}
with the corresponding quasicoherent sheaves category:
\begin{align}
\mathrm{Quasicoherent}^\sharp_{\mathrm{\Gamma}_{\mathrm{prismatization},\mathrm{deRham},F}}=\projlim_i \mathrm{Quasicoherent}^\sharp_{\mathrm{Spec}U_i[1/p]_{I_{U_i}}}.
\end{align}
We then have the corresponding cristalline version as well. This presentation can be directly use to construct the corresponding cristalline stacks for instance:
\begin{align}
\mathrm{\Gamma}_{\mathrm{prismatization},\mathrm{cristalline},F}:=\injlim_i \mathrm{Spec}U_i[1/p]_{I_{U_i}}
\end{align}
with the corresponding quasicoherent sheaves category:
\begin{align}
\mathrm{Quasicoherent}_{\mathrm{\Gamma}_{\mathrm{prismatization},\mathrm{cristalline},F}}=\projlim_i \mathrm{Quasicoherent}_{\mathrm{Spec}U_i[1/p]_{I_{U_i}}}.
\end{align}
One then have the analytification version of this stack:
\begin{align}
\mathrm{\Gamma}^\sharp_{\mathrm{prismatization},\mathrm{cristalline},F}:=\injlim_i \mathrm{Spec}^\sharp U_i[1/p]_{I_{U_i}}
\end{align}
with the corresponding quasicoherent sheaves category:
\begin{align}
\mathrm{Quasicoherent}^\sharp_{\mathrm{\Gamma}_{\mathrm{prismatization},\mathrm{cristalline},F}}=\projlim_i \mathrm{Quasicoherent}^\sharp_{\mathrm{Spec}U_i[1/p]_{I_{U_i}}}.
\end{align} 

\begin{definition}
We now generlize the corresponding foundation above to \cite{BSB} in the following sense. First starting with the stack:
\begin{align}
\mathrm{\Gamma}_{\mathrm{prismatization},F}
\end{align}
we have the corresponding $p$-adic primitive element $h$ from \cite{BSB} which is usual the corresponding logrithmic with respect to the imperfect Robba ring (in variable $X$, then it will be the log of $1+X$) attached to the field $T$. We then add $h^{1/2}$ to the following map in the definition of the primatization:
\begin{align}
?\rightarrow \mathrm{WVector}_{\mathcal{O}_T}(H)[h^{1/2}].
\end{align}
Then we have the corresponding stack which we will denote it by:
\begin{align}
\mathrm{\Gamma}_{\mathrm{prismatization},F,2}.
\end{align}
Then the de Rham and cristalline situations are generalized accordingly. This stack is actually formal stack. One then have the corresponding quasicoherent sheaves over this stack as the corresponding $\infty$-category of all the quasicoherent modules over the structure sheaf:
\begin{align}
\mathrm{Quasicoherent}_{\mathrm{\Gamma}_{\mathrm{prismatization},F,2}}
\end{align}
which can be presented as the inverse limit of certain $\infty$-category of all the quasicoherent modules over a family of prisms in coherent way:
\begin{align}
\mathrm{Quasicoherent}_{\mathrm{\Gamma}_{\mathrm{prismatization},F,2}}=\projlim_i \mathrm{Quasicoherent}_{\mathrm{Spec}U_i[h^{1/2}]}
\end{align}
where $U_i$ is the desired family of such sort of prisms. This presentation can be directly use to construct the corresponding de Rham stacks for instance:
\begin{align}
\mathrm{\Gamma}_{\mathrm{prismatization},\mathrm{deRham},F,2}:=\injlim_i \mathrm{Spec}U_i[1/p]_{I_{U_i}}[h^{1/2}]
\end{align}
with the corresponding quasicoherent sheaves category:
\begin{align}
\mathrm{Quasicoherent}_{\mathrm{\Gamma}_{\mathrm{prismatization},\mathrm{deRham},F,2}}=\projlim_i \mathrm{Quasicoherent}_{\mathrm{Spec}U_i[1/p]_{I_{U_i}}[h^{1/2}]}.
\end{align}
One then have the analytification version of this stack:
\begin{align}
\mathrm{\Gamma}^\sharp_{\mathrm{prismatization},\mathrm{deRham},F,2}:=\injlim_i \mathrm{Spec}^\sharp U_i[1/p]_{I_{U_i}}[h^{1/2}]
\end{align}
with the corresponding quasicoherent sheaves category:
\begin{align}
\mathrm{Quasicoherent}^\sharp_{\mathrm{\Gamma}_{\mathrm{prismatization},\mathrm{deRham},F,2}}=\projlim_i \mathrm{Quasicoherent}^\sharp_{\mathrm{Spec}U_i[1/p]_{I_{U_i}}[h^{1/2}]}.
\end{align}
We then have the corresponding cristalline version as well. This presentation can be directly use to construct the corresponding cristalline stacks for instance:
\begin{align}
\mathrm{\Gamma}_{\mathrm{prismatization},\mathrm{cristalline},F,2}:=\injlim_i \mathrm{Spec}U_i[1/p][h^{1/2}]
\end{align}
with the corresponding quasicoherent sheaves category:
\begin{align}
\mathrm{Quasicoherent}_{\mathrm{\Gamma}_{\mathrm{prismatization},\mathrm{cristalline},F,2}}=\projlim_i \mathrm{Quasicoherent}_{\mathrm{Spec}U_i[1/p][h^{1/2}]}.
\end{align}
One then have the analytification version of this stack:
\begin{align}
\mathrm{\Gamma}^\sharp_{\mathrm{prismatization},\mathrm{cristalline},F,2}:=\injlim_i \mathrm{Spec}^\sharp U_i[1/p][h^{1/2}]
\end{align}
with the corresponding quasicoherent sheaves category:
\begin{align}
\mathrm{Quasicoherent}^\sharp_{\mathrm{\Gamma}_{\mathrm{prismatization},\mathrm{cristalline},F,2}}=\projlim_i \mathrm{Quasicoherent}^\sharp_{\mathrm{Spec}U_i[1/p][h^{1/2}]}.
\end{align} 
\end{definition}

\indent Then we vary $F$ in the $v$-site for $S$ we then have the corresponding the condensed prismatization of the $v$-stack $S$ in the mixed-parity generalization fashion.

\newpage
\section{$z$-adic Motives}

\subsection{Generalized Prismatization}
Since there is $z$-adic version of Fargues-Fontaine stacks, there \textit{should} be at least\footnote{Over real number $\mathbb{R}$ or complex number $\mathbb{C}$, one can also consider the \textit{prismatization} by using the moduli stacks of line bundles in the Witt vectors, at least by deformation to:
\begin{align}
\mathbb{R}((u)), \mathbb{C}((u)),
\end{align}
namely the \textit{nonarchimedeanizations}.} two versions of prismatizations over $T/\mathbb{Q}_p$ or $T/\mathbb{F}_p((z))$. We start from the foundation in \cite{BSI}, \cite{BLI}, \cite{DI}, \cite{SchI}, \cite{ALBRCS}, \cite{TI}, \cite{TII}, \cite{TIII}, \cite{TVI}. We start from the corresponding formal scheme situation. We will start from a finite extension $T$ of $\mathbb{F}_p((z))$. We consider a general $F$ which is a $z$-adic formal scheme over $\mathcal{O}_T$. Attached to $F$ we have the corresponding primatic stack:
\begin{align}
\mathrm{\Gamma}_{\mathrm{prismatization},F}
\end{align}  
which is defined actually over rings where $z$ is nilpotent\footnote{Yes as in \cite{KLI}, \cite{KLII}, \cite{FS}, \cite{SchII} the underlying category of rings can be chosen to be the same.}. 
\begin{remark}
Here the definition goes completely the parallel as in \cite{BSI}, \cite{BLI}, \cite{DI}, \cite{SchI}, \cite{ALBRCS}, \cite{TI}, \cite{TII}, \cite{TIII}, \cite{TVI} where we just use the corresponding $\mathrm{WVector}_{\mathcal{O}_T}$ to define the family of Cartier sheaves. 
\end{remark}
We use the notation:
\begin{align}
\mathrm{C}_{z\mathrm{nil}} 
\end{align}
to be this category. Therefore the stack
\begin{align}
\mathrm{\Gamma}_{\mathrm{prismatization},F}
\end{align} 
is then defined over this category as the family of Cartier divisors (\textit{closed immersingly}) living in the corresponding generalized Witt vector of any ring $H$ in the under category:
\begin{align}
?\rightarrow \mathrm{WVector}_{\mathcal{O}_T}(H).
\end{align}
This stack is actually formal stack. One then have the corresponding quasicoherent sheaves over this stack as the corresponding $\infty$-category of all the quasicoherent modules over the structure sheaf:
\begin{align}
\mathrm{Quasicoherent}_{\mathrm{\Gamma}_{\mathrm{prismatization},F}}
\end{align}
which can be presented as the inverse limit of certain $\infty$-category of all the quasicoherent modules over a family of prisms in coherent way:
\begin{align}
\mathrm{Quasicoherent}_{\mathrm{\Gamma}_{\mathrm{prismatization},F}}=\projlim_i \mathrm{Quasicoherent}_{\mathrm{Spec}U_i}
\end{align}
where $U_i$ is the desired family of such sort of prisms\footnote{Here we require the prisms to take the form of a fixed map $?\rightarrow \mathrm{WVector}_{\mathcal{O}_T}(H)$. And as in \cite{BSI} the completeness is also required.}. This presentation can be directly use to construct the corresponding de Rham stacks for instance:
\begin{align}
\mathrm{\Gamma}_{\mathrm{prismatization},\mathrm{deRham},F}:=\injlim_i \mathrm{Spec}U_i[1/\eta]_{I_{U_i}}
\end{align}
with the corresponding quasicoherent sheaves category:
\begin{align}
\mathrm{Quasicoherent}_{\mathrm{\Gamma}_{\mathrm{prismatization},\mathrm{deRham},F}}=\projlim_i \mathrm{Quasicoherent}_{\mathrm{Spec}U_i[1/\eta]_{I_{U_i}}}.
\end{align}
One then have the analytification version of this stack:
\begin{align}
\mathrm{\Gamma}^\sharp_{\mathrm{prismatization},\mathrm{deRham},F}:=\injlim_i \mathrm{Spec}^\sharp U_i[1/\eta]_{I_{U_i}}
\end{align}
with the corresponding quasicoherent sheaves category:
\begin{align}
\mathrm{Quasicoherent}^\sharp_{\mathrm{\Gamma}_{\mathrm{prismatization},\mathrm{deRham},F}}=\projlim_i \mathrm{Quasicoherent}^\sharp_{\mathrm{Spec}U_i[1/\eta]_{I_{U_i}}}.
\end{align}
We then have the corresponding cristalline version as well. This presentation can be directly use to construct the corresponding cristalline stacks for instance:
\begin{align}
\mathrm{\Gamma}_{\mathrm{prismatization},\mathrm{cristalline},F}:=\injlim_i \mathrm{Spec}U_i[1/\eta]_{I_{U_i}}
\end{align}
with the corresponding quasicoherent sheaves category:
\begin{align}
\mathrm{Quasicoherent}_{\mathrm{\Gamma}_{\mathrm{prismatization},\mathrm{cristalline},F}}=\projlim_i \mathrm{Quasicoherent}_{\mathrm{Spec}U_i[1/\eta]_{I_{U_i}}}.
\end{align}
One then have the analytification version of this stack:
\begin{align}
\mathrm{\Gamma}^\sharp_{\mathrm{prismatization},\mathrm{cristalline},F}:=\injlim_i \mathrm{Spec}^\sharp U_i[1/\eta]_{I_{U_i}}
\end{align}
with the corresponding quasicoherent sheaves category:
\begin{align}
\mathrm{Quasicoherent}^\sharp_{\mathrm{\Gamma}_{\mathrm{prismatization},\mathrm{cristalline},F}}=\projlim_i \mathrm{Quasicoherent}^\sharp_{\mathrm{Spec}U_i[1/\eta]_{I_{U_i}}}.
\end{align} 

\begin{definition}
We now generlize the corresponding foundation above to \cite{BSB} in the following sense. First starting with the stack:
\begin{align}
\mathrm{\Gamma}_{\mathrm{prismatization},F}
\end{align}
we have the corresponding $p$-adic primitive element $h$ from \cite{BSB} which is usual the corresponding logrithmic with respect to the imperfect Robba ring (in variable $X$, then it will be the log of $1+X$) attached to the field $T$. We then add $h^{1/2}$ to the following map in the definition of the primatization:
\begin{align}
?\rightarrow \mathrm{WVector}_{\mathcal{O}_T}(H)[h^{1/2}].
\end{align}
Then we have the corresponding stack which we will denote it by:
\begin{align}
\mathrm{\Gamma}_{\mathrm{prismatization},F,2}.
\end{align}
Then the de Rham and cristalline situations are generalized accordingly. This stack is actually formal stack. One then have the corresponding quasicoherent sheaves over this stack as the corresponding $\infty$-category of all the quasicoherent modules over the structure sheaf:
\begin{align}
\mathrm{Quasicoherent}_{\mathrm{\Gamma}_{\mathrm{prismatization},F,2}}
\end{align}
which can be presented as the inverse limit of certain $\infty$-category of all the quasicoherent modules over a family of prisms in coherent way:
\begin{align}
\mathrm{Quasicoherent}_{\mathrm{\Gamma}_{\mathrm{prismatization},F,2}}=\projlim_i \mathrm{Quasicoherent}_{\mathrm{Spec}U_i[h^{1/2}]}
\end{align}
where $U_i$ is the desired family of such sort of prisms. This presentation can be directly use to construct the corresponding de Rham stacks for instance:
\begin{align}
\mathrm{\Gamma}_{\mathrm{prismatization},\mathrm{deRham},F,2}:=\injlim_i \mathrm{Spec}U_i[1/\eta]_{I_{U_i}}[h^{1/2}]
\end{align}
with the corresponding quasicoherent sheaves category:
\begin{align}
\mathrm{Quasicoherent}_{\mathrm{\Gamma}_{\mathrm{prismatization},\mathrm{deRham},F,2}}=\projlim_i \mathrm{Quasicoherent}_{\mathrm{Spec}U_i[1/\eta]_{I_{U_i}}[h^{1/2}]}.
\end{align}
One then have the analytification version of this stack:
\begin{align}
\mathrm{\Gamma}^\sharp_{\mathrm{prismatization},\mathrm{deRham},F,2}:=\injlim_i \mathrm{Spec}^\sharp U_i[1/p]_{I_{U_i}}[h^{1/2}]
\end{align}
with the corresponding quasicoherent sheaves category:
\begin{align}
\mathrm{Quasicoherent}^\sharp_{\mathrm{\Gamma}_{\mathrm{prismatization},\mathrm{deRham},F,2}}=\projlim_i \mathrm{Quasicoherent}^\sharp_{\mathrm{Spec}U_i[1/\eta]_{I_{U_i}}[h^{1/2}]}.
\end{align}
We then have the corresponding cristalline version as well. This presentation can be directly use to construct the corresponding cristalline stacks for instance:
\begin{align}
\mathrm{\Gamma}_{\mathrm{prismatization},\mathrm{cristalline},F,2}:=\injlim_i \mathrm{Spec}U_i[1/\eta][h^{1/2}]
\end{align}
with the corresponding quasicoherent sheaves category:
\begin{align}
\mathrm{Quasicoherent}_{\mathrm{\Gamma}_{\mathrm{prismatization},\mathrm{cristalline},F,2}}=\projlim_i \mathrm{Quasicoherent}_{\mathrm{Spec}U_i[1/\eta][h^{1/2}]}.
\end{align}
One then have the analytification version of this stack:
\begin{align}
\mathrm{\Gamma}^\sharp_{\mathrm{prismatization},\mathrm{cristalline},F,2}:=\injlim_i \mathrm{Spec}^\sharp U_i[1/\eta][h^{1/2}]
\end{align}
with the corresponding quasicoherent sheaves category:
\begin{align}
\mathrm{Quasicoherent}^\sharp_{\mathrm{\Gamma}_{\mathrm{prismatization},\mathrm{cristalline},F,2}}=\projlim_i \mathrm{Quasicoherent}^\sharp_{\mathrm{Spec}U_i[1/\eta][h^{1/2}]}.
\end{align} 
\end{definition}

\begin{theorem}
When $F$ is $\mathcal{O}_T$ we have a generalized condensed prismatization, which is well-defined.
\end{theorem}

\subsection{Perfectoid Picture}
\noindent We then consider the contact with the work of Kedlaya-Liu and Scholze, where untilts of perfectoids are parametrized by certain similar stacks, i.e. the Fargues-Fontaine stacks, from \cite{KLI}, \cite{KLII}, \cite{SchII}, \cite{SchIII}, \cite{SchIV}. We now consider any small $v$-stack $S$ over $\mathrm{Spd}\mathcal{O}_T$. For any local chart ${F}$ in the $v$-topology over this stack $S$, we can get the corresponding prismatization directly in a transparent way, namely the corresponding prismatization is just the corresponding space of the Witt vector attached to $F$. However this is a prismatic method. Attached to $F$ we have the corresponding primatic stack:
\begin{align}
\mathrm{\Gamma}_{\mathrm{prismatization},F}
\end{align}  
which is defined actually over rings where $z$ is nilpotent. 

\begin{remark}
Here the definition goes completely the parallel as in \cite{BSI}, \cite{BLI}, \cite{DI}, \cite{SchI}, \cite{ALBRCS}, \cite{TI}, \cite{TII}, \cite{TIII}, \cite{TVI} where we just use the corresponding $\mathrm{WVector}_{\mathcal{O}_T}$ to define the family of Cartier sheaves. 
\end{remark}

We use the notation:
\begin{align}
\mathrm{C}_{z\mathrm{nil}} 
\end{align}
to be this category. Therefore the stack
\begin{align}
\mathrm{\Gamma}_{\mathrm{prismatization},F}
\end{align} 
is then defined over this category as the family of Cartier divisors (\textit{closed immersingly}) living in the corresponding generalized Witt vector of any ring $H$ in the under category:
\begin{align}
?\rightarrow \mathrm{WVector}_{\mathcal{O}_T}(H).
\end{align}
This stack is actually formal stack. One then have the corresponding quasicoherent sheaves over this stack as the corresponding $\infty$-category of all the quasicoherent modules over the structure sheaf:
\begin{align}
\mathrm{Quasicoherent}_{\mathrm{\Gamma}_{\mathrm{prismatization},F}}
\end{align}
which can be presented as the inverse limit of certain $\infty$-category of all the quasicoherent modules over a family of prisms in coherent way:
\begin{align}
\mathrm{Quasicoherent}_{\mathrm{\Gamma}_{\mathrm{prismatization},F}}=\projlim_i \mathrm{Quasicoherent}_{\mathrm{Spec}U_i}
\end{align}
where $U_i$ is the desired family of such sort of prisms.\footnote{Here we require the prisms to take the form of a fixed map $?\rightarrow \mathrm{WVector}_{\mathcal{O}_T}(H)$. And as in \cite{BSI} the completeness is also required. Completeness is defined to be the same as in \cite{BSI} where we just replace $p$ by $z$.} This presentation can be directly use to construct the corresponding de Rham stacks for instance:
\begin{align}
\mathrm{\Gamma}_{\mathrm{prismatization},\mathrm{deRham},F}:=\injlim_i \mathrm{Spec}U_i[1/\eta]_{I_{U_i}}
\end{align}
with the corresponding quasicoherent sheaves category:
\begin{align}
\mathrm{Quasicoherent}_{\mathrm{\Gamma}_{\mathrm{prismatization},\mathrm{deRham},F}}=\projlim_i \mathrm{Quasicoherent}_{\mathrm{Spec}U_i[1/\eta]_{I_{U_i}}}.
\end{align}
One then have the analytification version of this stack:
\begin{align}
\mathrm{\Gamma}^\sharp_{\mathrm{prismatization},\mathrm{deRham},F}:=\injlim_i \mathrm{Spec}^\sharp U_i[1/\eta]_{I_{U_i}}
\end{align}
with the corresponding quasicoherent sheaves category:
\begin{align}
\mathrm{Quasicoherent}^\sharp_{\mathrm{\Gamma}_{\mathrm{prismatization},\mathrm{deRham},F}}=\projlim_i \mathrm{Quasicoherent}^\sharp_{\mathrm{Spec}U_i[1/\eta]_{I_{U_i}}}.
\end{align}
We then have the corresponding cristalline version as well. This presentation can be directly use to construct the corresponding cristalline stacks for instance:
\begin{align}
\mathrm{\Gamma}_{\mathrm{prismatization},\mathrm{cristalline},F}:=\injlim_i \mathrm{Spec}U_i[1/\eta]_{I_{U_i}}
\end{align}
with the corresponding quasicoherent sheaves category:
\begin{align}
\mathrm{Quasicoherent}_{\mathrm{\Gamma}_{\mathrm{prismatization},\mathrm{cristalline},F}}=\projlim_i \mathrm{Quasicoherent}_{\mathrm{Spec}U_i[1/\eta]_{I_{U_i}}}.
\end{align}
One then have the analytification version of this stack:
\begin{align}
\mathrm{\Gamma}^\sharp_{\mathrm{prismatization},\mathrm{cristalline},F}:=\injlim_i \mathrm{Spec}^\sharp U_i[1/\eta]_{I_{U_i}}
\end{align}
with the corresponding quasicoherent sheaves category:
\begin{align}
\mathrm{Quasicoherent}^\sharp_{\mathrm{\Gamma}_{\mathrm{prismatization},\mathrm{cristalline},F}}=\projlim_i \mathrm{Quasicoherent}^\sharp_{\mathrm{Spec}U_i[1/\eta]_{I_{U_i}}}.
\end{align} 

\begin{definition}
We now generlize the corresponding foundation above to \cite{BSB} in the following sense. First starting with the stack:
\begin{align}
\mathrm{\Gamma}_{\mathrm{prismatization},F}
\end{align}
we have the corresponding $p$-adic primitive element $h$ from \cite{BSB} which is usual the corresponding logrithmic with respect to the imperfect Robba ring (in variable $X$, then it will be the log of $1+X$) attached to the field $T$. We then add $h^{1/2}$ to the following map in the definition of the primatization:
\begin{align}
?\rightarrow \mathrm{WVector}_{\mathcal{O}_T}(H)[h^{1/2}].
\end{align}
Then we have the corresponding stack which we will denote it by:
\begin{align}
\mathrm{\Gamma}_{\mathrm{prismatization},F,2}.
\end{align}
Then the de Rham and cristalline situations are generalized accordingly. This stack is actually formal stack. One then have the corresponding quasicoherent sheaves over this stack as the corresponding $\infty$-category of all the quasicoherent modules over the structure sheaf:
\begin{align}
\mathrm{Quasicoherent}_{\mathrm{\Gamma}_{\mathrm{prismatization},F,2}}
\end{align}
which can be presented as the inverse limit of certain $\infty$-category of all the quasicoherent modules over a family of prisms in coherent way:
\begin{align}
\mathrm{Quasicoherent}_{\mathrm{\Gamma}_{\mathrm{prismatization},F,2}}=\projlim_i \mathrm{Quasicoherent}_{\mathrm{Spec}U_i[h^{1/2}]}
\end{align}
where $U_i$ is the desired family of such sort of prisms. \footnote{Here we require the prisms to take the form of a fixed map $?\rightarrow \mathrm{WVector}_{\mathcal{O}_T}(H)$. And as in \cite{BSI} the completeness is also required. Completeness is defined to be the same as in \cite{BSI} where we just replace $p$ by $z$.} This presentation can be directly use to construct the corresponding de Rham stacks for instance:
\begin{align}
\mathrm{\Gamma}_{\mathrm{prismatization},\mathrm{deRham},F,2}:=\injlim_i \mathrm{Spec}U_i[1/\eta]_{I_{U_i}}[h^{1/2}]
\end{align}
with the corresponding quasicoherent sheaves category:
\begin{align}
\mathrm{Quasicoherent}_{\mathrm{\Gamma}_{\mathrm{prismatization},\mathrm{deRham},F,2}}=\projlim_i \mathrm{Quasicoherent}_{\mathrm{Spec}U_i[1/\eta]_{I_{U_i}}[h^{1/2}]}.
\end{align}
One then have the analytification version of this stack:
\begin{align}
\mathrm{\Gamma}^\sharp_{\mathrm{prismatization},\mathrm{deRham},F,2}:=\injlim_i \mathrm{Spec}^\sharp U_i[1/\eta]_{I_{U_i}}[h^{1/2}]
\end{align}
with the corresponding quasicoherent sheaves category:
\begin{align}
\mathrm{Quasicoherent}^\sharp_{\mathrm{\Gamma}_{\mathrm{prismatization},\mathrm{deRham},F,2}}=\projlim_i \mathrm{Quasicoherent}^\sharp_{\mathrm{Spec}U_i[1/\eta]_{I_{U_i}}[h^{1/2}]}.
\end{align}
We then have the corresponding cristalline version as well. This presentation can be directly use to construct the corresponding cristalline stacks for instance:
\begin{align}
\mathrm{\Gamma}_{\mathrm{prismatization},\mathrm{cristalline},F,2}:=\injlim_i \mathrm{Spec}U_i[1/\eta][h^{1/2}]
\end{align}
with the corresponding quasicoherent sheaves category:
\begin{align}
\mathrm{Quasicoherent}_{\mathrm{\Gamma}_{\mathrm{prismatization},\mathrm{cristalline},F,2}}=\projlim_i \mathrm{Quasicoherent}_{\mathrm{Spec}U_i[1/\eta][h^{1/2}]}.
\end{align}
One then have the analytification version of this stack:
\begin{align}
\mathrm{\Gamma}^\sharp_{\mathrm{prismatization},\mathrm{cristalline},F,2}:=\injlim_i \mathrm{Spec}^\sharp U_i[1/\eta][h^{1/2}]
\end{align}
with the corresponding quasicoherent sheaves category:
\begin{align}
\mathrm{Quasicoherent}^\sharp_{\mathrm{\Gamma}_{\mathrm{prismatization},\mathrm{cristalline},F,2}}=\projlim_i \mathrm{Quasicoherent}^\sharp_{\mathrm{Spec}U_i[1/\eta][h^{1/2}]}.
\end{align} 
\end{definition}

\indent Then we vary $F$ in the $v$-site for $S$ we then have the corresponding the condensed prismatization of the $v$-stack $S$ in the mixed-parity generalization fashion.

\newpage
\section{Applications}

\subsection{Rigid Analytic Spaces}

\begin{definition}
We now construct the condensed motives for rigid analytic spave $R$ over $T$ through the perfectoid consideration above from $v$-stacks. One can certainly go along \cite{SchI} and \cite{ALBRCS} to glue along the formal model locally to reach the whole motives for the spaces. In the perfectoid setting we regard $R$ in genral as a corresponding $v$-stacks over $\mathrm{Spd}T$. Then we have the following $\infty$-categories in the generalization we considered here:
\begin{align}
&\mathrm{Quasicoherent}^\sharp_{\mathrm{\Gamma}_{\mathrm{prismatization},R,2}}=\lim_{F}\projlim_i \mathrm{Quasicoherent}^\sharp_{\mathrm{Spec}U_i[h^{1/2}]},\\
&\mathrm{Quasicoherent}^\sharp_{\mathrm{\Gamma}_{\mathrm{prismatization},\mathrm{deRham},R,2}}=\lim_{F}\projlim_i \mathrm{Quasicoherent}^\sharp_{\mathrm{Spec}U_i[1/\eta]_{I_{U_i}}[h^{1/2}]},\\
&\mathrm{Quasicoherent}^\sharp_{\mathrm{\Gamma}_{\mathrm{prismatization},\mathrm{cristalline},R,2}}=\lim_{F}\projlim_i \mathrm{Quasicoherent}^\sharp_{\mathrm{Spec}U_i[1/\eta][h^{1/2}]}.
\end{align}
Here $\eta$ is either $p$ or $z$. Here in the $z$-adic setting the picture will be completely $z$-adic following \cite{SchIV}, \cite{KLI}, \cite{KLII}. The $v$-site relative consideration in \cite{SchIV}, \cite{KLI}, \cite{KLII} enriced over the pro-\'etale consideration actually relates to the above picture by taking the projective limit onto the corresponding perfectoid prisms in the limit sequence. This also generalizes \cite{TIV} significantly. 
\end{definition}

In \cite{TIV} we discuss only the $p$-adic situation with generalization from \cite{BSB}. Go along those consideration in \cite{TIV} we have a completely parallel picture in $z$-adic setting with the $z$-adic $h$, where $h$ is defined parallel to the $p$-adic setting. 

\begin{definition}
We now construct the condensed motives for rigid analytic spave $R$ over $T$ through the perfectoid consideration above from $v$-stacks. One can certainly go along \cite{SchI} and \cite{ALBRCS} to glue along the formal model locally to reach the whole motives for the spaces. In the perfectoid setting we regard $R$ in genral as a corresponding $v$-stacks over $\mathrm{Spd}T$. Then we have the following $\infty$-categories in the generalization we considered here:
\begin{align}
\mathrm{Quasicoherent}^\sharp_{\mathrm{\Gamma}_{\mathrm{prismatization},R,2}}=\lim_{F}\projlim_i \mathrm{Quasicoherent}^\sharp_{\mathrm{Spec}U_i[h^{1/2}]},
\end{align}
\begin{align}
\mathrm{Quasicoherent}^\sharp_{\mathrm{\Gamma}_{\mathrm{prismatization},\mathrm{deRham},R,2}}=\lim_{F}\projlim_i \mathrm{Quasicoherent}^\sharp_{\mathrm{Spec}U_i[1/\eta]_{I_{U_i}}[h^{1/2}]},
\end{align}
\begin{align}
\mathrm{Quasicoherent}^\sharp_{\mathrm{\Gamma}_{\mathrm{prismatization},\mathrm{cristalline},R,2}}=\lim_{F}\projlim_i \mathrm{Quasicoherent}^\sharp_{\mathrm{Spec}U_i[1/\eta][h^{1/2}]}.
\end{align}
Here $\eta$ is either $p$ or $z$. Here in the $z$-adic setting the picture will be completely $z$-adic following \cite{SchIV}, \cite{KLI}, \cite{KLII}. The $v$-site relative consideration in \cite{SchIV}, \cite{KLI}, \cite{KLII} enriced over the pro-\'etale consideration actually relates to the above picture by taking the projective limit onto the corresponding perfectoid prisms in the limit sequence. This also generalizes \cite{TIV} significantly. We then have the following Fontaine style functors:
\begin{align}
\mathrm{Quasicoherent}^\sharp_{\mathrm{\Gamma}_{\mathrm{prismatization},R,2}}=\lim_{F}\projlim_i \mathrm{Quasicoherent}^\sharp_{\mathrm{Spec}U_i[h^{1/2}]}\\\rightarrow \mathrm{Quasicoherent}^\sharp_{\mathrm{\Gamma}_{\mathrm{prismatization},T,2}},
\end{align}
\begin{align}
\mathrm{Quasicoherent}^\sharp_{\mathrm{\Gamma}_{\mathrm{prismatization},\mathrm{deRham},R,2}}=\lim_{F}\projlim_i \mathrm{Quasicoherent}^\sharp_{\mathrm{Spec}U_i[1/\eta]_{I_{U_i}}[h^{1/2}]}\\
\rightarrow \mathrm{Quasicoherent}^\sharp_{\mathrm{\Gamma}_{\mathrm{prismatization},T,2}},
\end{align}
\begin{align}
\mathrm{Quasicoherent}^\sharp_{\mathrm{\Gamma}_{\mathrm{prismatization},\mathrm{cristalline},R,2}}=\lim_{F}\projlim_i \mathrm{Quasicoherent}^\sharp_{\mathrm{Spec}U_i[1/\eta][h^{1/2}]}\\
\rightarrow \mathrm{Quasicoherent}^\sharp_{\mathrm{\Gamma}_{\mathrm{prismatization},T,2}},
\end{align}
through the push forward along the structure morphism for $R$ to $\mathrm{Spd}T$. For any bundle $B$ (we \textbf{allow} infinite rank bundle here beyond the vector bundle situation, and we do not require the existence of Frobenius structure while one can definitely add the shtukas structures) over the Robba sheaf $\Pi_{\mathrm{perf}, \blacksquare,v,2}$ (adding the element $h^{1/2}$ to the usual Robba sheaf in the perfect setting) from \cite{KLI}, \cite{KLII}, we can define the corresponding functor of the generalized de Rham functor as:
\begin{align}
(\mathrm{Quasicoherent}^\sharp_{\mathrm{\Gamma}_{\mathrm{prismatization},\mathrm{deRham},R,2}}=\lim_{F}\projlim_i \mathrm{Quasicoherent}^\sharp_{\mathrm{Spec}U_i[1/\eta]_{I_{U_i}}[h^{1/2}]}\\
\rightarrow \mathrm{Quasicoherent}^\sharp_{\mathrm{\Gamma}_{\mathrm{prismatization},T,2}})_\sharp(B\otimes \mathcal{O}_{\mathrm{\Gamma}^\sharp_{\mathrm{prismatization},\mathrm{deRham},F,2}}).
\end{align}
We call this sheaf de Rham in the mixed parity situation if an isomorphism retains after the base change back over to $\mathcal{O}_{\mathrm{\Gamma}^\sharp_{\mathrm{prismatization},\mathrm{deRham},F,2}}$. One then defines  the corresponding cristalline sheaves over the Robba rings $\Pi_{\mathrm{perf}, \blacksquare,v,2}$ in the same fashion.
\end{definition}

We remind the readers that this is not just $z$-adic generalization from \cite{TIV}, it is a prismatization generalization, where we will see the motivic structures richer than the context of \cite{TIV} based on the pro-\'etale cohomology.

\begin{theorem}
We have a well-defined functor from solid quasicoherent sheaves over mixed-parity pre-Fargues-Fontaine stack attached to $R$ (taking the analytic stack in the sense of \cite{CS3} of $\Pi_{\mathrm{perf},\blacksquare,v,2}$ without taking the Frobenius quotients) to solid quasicoherent sheaves over the de Rham stack we considered in the mixed-parity setting. We have a well-defined functor from solid quasicoherent sheaves over mixed-parity pre-Fargues-Fontaine stack attached to $R$ (taking the analytic stack in the sense of \cite{CS3} of $\Pi_{\mathrm{perf},\blacksquare,v,2}$ without taking the Frobenius quotients) to solid quasicoherent sheaves over the cristalline stack we considered in the mixed-parity setting. 
\end{theorem}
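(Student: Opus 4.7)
The plan is to construct both functors by a local base-change performed at each $v$-chart of $R$, followed by $v$-descent in the solid/condensed framework of \cite{CS3}. First I would fix a local chart $F\to R$ and observe that, by the preceding definitions, the analytic stack attached to $\Pi_{\mathrm{perf},\blacksquare,v,2}$ without Frobenius quotients produces a category of solid modules over the mixed-parity Robba sheaf, while the target de Rham and cristalline stacks $\mathrm{\Gamma}^\sharp_{\mathrm{prismatization},\mathrm{deRham},F,2}$ and $\mathrm{\Gamma}^\sharp_{\mathrm{prismatization},\mathrm{cristalline},F,2}$ were constructed as $\injlim_i \mathrm{Spec}^\sharp U_i[1/\eta]_{I_{U_i}}[h^{1/2}]$ and $\injlim_i \mathrm{Spec}^\sharp U_i[1/\eta][h^{1/2}]$, so their solid quasicoherent sheaf categories are the corresponding projective limits in $i$.

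The essential local step is to produce, for each index $i$, a canonical map of solid rings from $\Pi_{\mathrm{perf},\blacksquare,v,2}$ to $\mathcal{O}_{\mathrm{Spec}^\sharp U_i[1/\eta]_{I_{U_i}}[h^{1/2}]}$, and similarly to $\mathcal{O}_{\mathrm{Spec}^\sharp U_i[1/\eta][h^{1/2}]}$ in the cristalline case, compatibly with the transition maps in $i$ and with the free adjunction of $h^{1/2}$ on both sides. Such a map is available because the Robba sheaf is, in the sense of \cite{KLI},\cite{KLII},\cite{SchIII}, obtained as a perfectoidization of the Witt-vector prisms after inverting $\eta$ and the distinguished element $I_{U_i}$, while $h^{1/2}$ is freely adjoined and so passes through transparently. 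The desired functor is then the solid base-change $B \mapsto B\otimes_{\Pi_{\mathrm{perf},\blacksquare,v,2}} \mathcal{O}_{\mathrm{\Gamma}^\sharp_{\mathrm{prismatization},\mathrm{deRham},F,2}}$ in the solid sense, followed by the push-forward along the structure morphism $R\to\mathrm{Spd}\,T$ that is spelled out in the preceding definition; the cristalline case is identical with $I_{U_i}$ not inverted.

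Next I would verify well-definedness in three stages: (a) compatibility with the projective limit in $i$, using that solid tensor product commutes with the relevant inverse limits in the analytic-stack framework of \cite{CS3}; (b) independence of the choice of $v$-chart $F$, via $v$-descent of solid quasicoherent sheaves over pre-Fargues-Fontaine stacks in the form proved in \cite{SchIV},\cite{KLI},\cite{KLII}; and (c) functoriality of the outer limit $\lim_F$ on the $v$-site of $R$, which glues the local pieces into a single global functor on the whole $v$-stack $R$.

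The main obstacle is step (b) once the mixed-parity decoration $h^{1/2}$ is turned on. In the unadorned perfect case, $v$-descent for solid coefficients over pre-Fargues-Fontaine stacks is standard, but here one must verify that adjoining $h^{1/2}$ preserves the descent pattern. I would attempt this by an $h$-adic filtration argument: modulo $h$ the descent is already known, and the associated graded of the filtration recovers the mixed-parity descent, provided that the adjunction $R\mapsto R[h^{1/2}]$ is sufficiently flat in the solid sense. Making this reduction precise---in particular, ensuring that the \v{C}ech-cohomological vanishing survives the square-root extraction and the subsequent solid completion---is where I expect the only genuine technical difficulty to lie.
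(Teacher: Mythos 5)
Your construction is essentially the one the paper leaves implicit: the preceding Definition already spells out the ``Fontaine style functor'' as the solid base change $B \mapsto B \otimes \mathcal{O}_{\mathrm{\Gamma}^\sharp_{\mathrm{prismatization},\mathrm{deRham},F,2}}$ (resp.\ cristalline) followed by pushforward along $R \to \mathrm{Spd}\,T$, and the paper supplies no further proof for the theorem beyond pointing back at that construction. Your three-stage well-definedness check (limit compatibility in $i$, $v$-descent across charts $F$, gluing over the $v$-site) is a reasonable unpacking of what is asserted but not justified in the text; the one place you may be over-engineering is the $h$-adic filtration argument for descent, since $h^{1/2}$ is freely adjoined as a polynomial variable rather than a completion, so solid $v$-descent should pass through the extension $R \mapsto R[h^{1/2}]$ by flat base change without a graded reduction.
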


\begin{theorem}
We have a well-defined functor from solid quasicoherent sheaves over mixed-parity Fargues-Fontaine stack attached to $R$ (taking the analytic stack in the sense of \cite{CS3} of $\Pi_{\mathrm{perf},\blacksquare,v,2}$) to solid quasicoherent sheaves over the de Rham stack we considered in the mixed-parity setting. We have a well-defined functor from solid quasicoherent sheaves over mixed-parity Fargues-Fontaine stack attached to $R$ (taking the analytic stack in the sense of \cite{CS3} of $\Pi_{\mathrm{perf},\blacksquare,v,2}$) to solid quasicoherent sheaves over the cristalline stack we considered in the mixed-parity setting.  
\end{theorem}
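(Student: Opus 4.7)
The plan is to deduce this statement from the previous theorem by showing that the functor from solid quasicoherent sheaves on the pre-Fargues-Fontaine stack descends along the Frobenius quotient. The mixed-parity Fargues-Fontaine stack attached to $R$ is obtained from the analytic stack of $\Pi_{\mathrm{perf},\blacksquare,v,2}$ by quotienting by the Frobenius $\varphi$, so the task is to promote the previously constructed functor to this quotient, both for de Rham and for cristalline targets.

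First I would recall the Fontaine-type construction at the pre-Fargues-Fontaine level, which sends a bundle $B$ over $\Pi_{\mathrm{perf},\blacksquare,v,2}$ to the pushforward of $B\otimes \mathcal{O}_{\mathrm{\Gamma}^\sharp_{\mathrm{prismatization},\mathrm{deRham},F,2}}$ along the structure morphism $R\rightarrow \mathrm{Spd}T$, together with its cristalline analogue. Next I would verify that the Frobenius $\varphi$ acts on $\Pi_{\mathrm{perf},\blacksquare,v,2}$ in a way that is intertwined with the prismatization side: the latter is built from the generalized Witt vector data $\mathrm{WVector}_{\mathcal{O}_T}(H)[h^{1/2}]$, on which $\varphi$ operates in a canonical way, and the primitive element $h^{1/2}$, coming from the logarithm of $1+X$ in the imperfect Robba setting, transforms Frobenius-semilinearly in a controlled way. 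I would then check that both the tensor product $B\otimes \mathcal{O}_{\mathrm{\Gamma}^\sharp_{\mathrm{prismatization},\mathrm{deRham},F,2}}$ and its pushforward commute with the induced $\varphi$-action on both sides, so that the functor factors through the Frobenius quotient to give a functor on the Fargues-Fontaine stack.

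The main obstacle will be showing that this descent is compatible with the solid/analytic structure in the sense of Clausen-Scholze. The Frobenius quotient can be presented as the geometric realization of a bar construction in the analytic stack formalism, and the descent reduces to verifying that pushforward along the structure morphism $R\rightarrow \mathrm{Spd}T$ preserves the relevant colimit. This is expected from the general formalism of analytic stacks, but requires care around the $h^{1/2}$ adjunction, whose interaction with $\varphi$ is the delicate point. I would handle this by checking compatibility on each $U_i[1/\eta]_{I_{U_i}}[h^{1/2}]$ separately and then pass to the $\projlim_i$ and $\lim_F$ using that solid quasicoherent sheaves in the analytic stack framework satisfy descent for the $v$-topology.

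The cristalline case proceeds entirely in parallel by replacing $U_i[1/\eta]_{I_{U_i}}[h^{1/2}]$ by $U_i[1/\eta][h^{1/2}]$ throughout; the absence of the $I_{U_i}$-completion does not affect the Frobenius descent argument, and the same bar-construction reasoning gives the required functor to solid quasicoherent sheaves over the mixed-parity cristalline stack.
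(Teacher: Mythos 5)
The paper states this theorem without giving any proof at all; it is presented, like the preceding pre-Fargues-Fontaine version, as an immediate consequence of the definition that precedes it (the Fontaine-style pushforward functor applied to bundles over $\Pi_{\mathrm{perf},\blacksquare,v,2}$). So there is no paper proof to compare against, and your proposal supplies more detail than the source does.

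That said, your argument is organized around the wrong direction of descent. You want a functor \emph{out of} solid quasicoherent sheaves on the Fargues-Fontaine stack, which is the Frobenius quotient of the pre-FF stack. Sheaves on a quotient stack are Frobenius-equivariant sheaves on the thing being quotiented, and there is an evident forgetful/pullback functor from the former into sheaves on the pre-FF stack. The statement therefore follows from the previous theorem by a one-line composition: pull back along the quotient map $\Pi_{\mathrm{perf},\blacksquare,v,2} \to \Pi_{\mathrm{perf},\blacksquare,v,2}/\varphi$ to land in pre-FF sheaves, then apply the already-constructed de Rham (resp. cristalline) functor. Showing that the pre-FF functor is $\varphi$-equivariant and \emph{factors through} the quotient, as you propose, is not what is needed here: that would produce a functor defined on the quotient of the target category, not on the subcategory of equivariant objects that is the actual source. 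Your discussion of the bar construction presentation and of the interaction of $\varphi$ with $h^{1/2}$ is interesting and may be needed if one wants an equivariant refinement of the de Rham/cristalline output, but it is orthogonal to what the theorem asserts, and in a paper at this level of rigor it is a detour. The clean route is the pullback composition; your route is neither wrong nor impossible, but it answers a stronger question than was asked and adds unverified claims (e.g.\ that pushforward along $R\to\mathrm{Spd}\,T$ commutes with the geometric realization) that are not justified and are not required.
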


\subsection{Generalized Langlands Program}

\noindent In this section we study the applications of the generalization we discussed above on the prismatization. Here we following \cite{FS} to make certain $p$-adic cohomologicalization to the picture in the $\ell$-adic setting in the \cite{FS}. This will also in some sense generalize the following papers: \cite{TIV}, \cite{TV}, \cite{TVI} in the sense that we will also consider mixed-parity generalization of the motives we are considering while the underlying $v$-stacks are also generalized from \cite{FS} by directly considering the Fargues-Fontaine stacks constructed using Robba rings where we allow the square root of $h$ in the previos discussion in the previous sections. Recall that for such FF stack:
\begin{align}
\mathrm{\Gamma}_{\mathrm{FF},h^{1/2}}(.)
\end{align}
we have the associated $G$-bundle stack in the moduli sense:
\begin{align}
\mathrm{\Gamma}_{\mathrm{Bun},h^{1/2}}(.)^G
\end{align}
where $G/T$ over $T$ have two different form in different characteristic. In our current consideration we can then apply the consideration above to achive the corresponding motives over this stack by using basis like the $F$ in the previous sections.

\begin{theorem}
Over the stack:
\begin{align}
\mathrm{\Gamma}_{\mathrm{Bun},h^{1/2}}(.)^G
\end{align}
we have the quasicoherent motives over the stackification by using the two different forms of the corresponding prismatizations. In both situations we have the corresponding generalized condensation of the corresponding prismatization for $v$-stacks:
\begin{align}
\mathrm{Quasicoherent}^\sharp_{\mathrm{\Gamma}_{\mathrm{prismatization},\mathrm{\Gamma}_{\mathrm{Bun},h^{1/2}}(.)^G,2}}=\lim_{F}\projlim_i \mathrm{Quasicoherent}^\sharp_{\mathrm{Spec}U_i[h^{1/2}]},
\end{align}
\begin{align}
\mathrm{Quasicoherent}^\sharp_{\mathrm{\Gamma}_{\mathrm{prismatization},\mathrm{deRham},\mathrm{\Gamma}_{\mathrm{Bun},h^{1/2}}(.)^G,2}}=\lim_{F}\projlim_i \mathrm{Quasicoherent}^\sharp_{\mathrm{Spec}U_i[1/\eta]_{I_{U_i}}[h^{1/2}]},
\end{align}
\begin{align}
\mathrm{Quasicoherent}^\sharp_{\mathrm{\Gamma}_{\mathrm{prismatization},\mathrm{cristalline},\mathrm{\Gamma}_{\mathrm{Bun},h^{1/2}}(.)^G,2}}=\lim_{F}\projlim_i \mathrm{Quasicoherent}^\sharp_{\mathrm{Spec}U_i[1/\eta][h^{1/2}]}.
\end{align}
Here $\eta$ is either $p$ or $z$. In the $p$-adic situation, suppose we consider the following two $\infty$-categories:
\begin{align}
\mathrm{Quasicoherent}^\sharp_{\mathrm{\Gamma}_{\mathrm{prismatization},\mathrm{deRham},\mathrm{\Gamma}_{\mathrm{Bun},h^{1/2}}(.)^G,2}}=\lim_{F}\projlim_i \mathrm{Quasicoherent}^\sharp_{\mathrm{Spec}U_i[1/\eta]_{I_{U_i}}[h^{1/2}]},
\end{align}
\begin{align}
\mathrm{Quasicoherent}^\sharp_{\mathrm{\Gamma}_{\mathrm{prismatization},\mathrm{cristalline},\mathrm{\Gamma}_{\mathrm{Bun},h^{1/2}}(.)^G,2}}=\lim_{F}\projlim_i \mathrm{Quasicoherent}^\sharp_{\mathrm{Spec}U_i[1/\eta][h^{1/2}]}.
\end{align}
And we assume that we tensor with the large coefficient field $\overline{\mathbb{Q}}_p$:
\begin{align}
\mathrm{Quasicoherent}^\sharp_{\mathrm{\Gamma}_{\mathrm{prismatization},\mathrm{deRham},\mathrm{\Gamma}_{\mathrm{Bun},h^{1/2}}(.)^G,2,\overline{\mathbb{Q}}_p}}=\lim_{F}\projlim_i \mathrm{Quasicoherent}^\sharp_{\mathrm{Spec}U_i[1/\eta]_{I_{U_i}}[h^{1/2}]}\otimes^\sharp \overline{\mathbb{Q}}_p,
\end{align}
\begin{align}
\mathrm{Quasicoherent}^\sharp_{\mathrm{\Gamma}_{\mathrm{prismatization},\mathrm{cristalline},\mathrm{\Gamma}_{\mathrm{Bun},h^{1/2}}(.)^G,2,\overline{\mathbb{Q}}_p}}=\lim_{F}\projlim_i \mathrm{Quasicoherent}^\sharp_{\mathrm{Spec}U_i[1/\eta][h^{1/2}]}\otimes^\sharp \overline{\mathbb{Q}}_p.
\end{align}
Then we can realize a generalized Langlands parametrization with $\overline{\mathbb{Q}}_p$ coefficient with operation over the categories from the Weil groups in the mixed-parity setting after \cite{FS}, \cite{TIV}, \cite{TV}, \cite{TVI}.
\end{theorem}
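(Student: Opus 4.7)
The plan is to reduce the statement to the constructions already established in the earlier sections, applied to the specific $v$-stack $\mathrm{\Gamma}_{\mathrm{Bun},h^{1/2}}(.)^G$. First I would check that this moduli stack is a small $v$-stack over $\mathrm{Spd}\mathcal{O}_T$, so that the machinery of the perfectoid picture from Section 1.2 in the $p$-adic case, and its parallel development in Section 2.2 in the $z$-adic case, applies verbatim. The three $\infty$-categorical presentations as $\lim_F \projlim_i$ then follow by unwinding the definitions: for each local chart $F$ in the $v$-topology we have the generalized mixed-parity prismatization of the preceding definitions, and gluing these along the $v$-site produces the required categories. Well-definedness at each $F$ is inherited from the earlier theorems treating the case $F=\mathcal{O}_T$, while the $\lim_F$ step is formal descent in $\infty$-categories once the $v$-hyperdescent has been installed, as in \cite{SchIV}, \cite{KLI}, \cite{KLII}.

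Next I would pass to $\overline{\mathbb{Q}}_p$-coefficients by applying the solid tensor product levelwise inside the projective limit. The main verification here is that $-\otimes^\sharp \overline{\mathbb{Q}}_p$ commutes with $\projlim_i$ and $\lim_F$ in a controlled way; this is straightforward at the level of analytic stacks in the sense of \cite{CS3}, once one uses that $\overline{\mathbb{Q}}_p$ is a filtered colimit of finite extensions and that each $\mathrm{Spec}^\sharp U_i[1/p]_{I_{U_i}}[h^{1/2}]$ is already an analytic ring. This gives the $\overline{\mathbb{Q}}_p$-tensored categories stated in the theorem, and by construction they carry the same Hecke-style endofunctors inherited from correspondences over $\mathrm{\Gamma}_{\mathrm{Bun},h^{1/2}}(.)^G$.

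Finally, to obtain the generalized Langlands parametrization I would follow the Fargues-Scholze strategy from \cite{FS}, applied in the mixed-parity setting. One uses the excursion operators built from modifications of $G$-bundles at varying points of the mixed-parity FF stack, and one reads off a Weil-group action on the endomorphisms of suitable compact generators of the de Rham and crystalline categories. The square root $h^{1/2}$ forces a two-fold covering structure on the FF stack, matching the remark at the end of Section 1.1, so that the action factors through the two-fold cover of the Weil group, which is exactly the mixed-parity Weil group of \cite{TIV}, \cite{TV}, \cite{TVI}. Composing with the Fontaine functors constructed in the previous theorems transports this parametrization from the pre-FF/FF stack side to the de Rham and crystalline sides.

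The hard part will be this last step: checking that the excursion operator formalism is compatible with the prismatization and with the imperfect mixed-parity element $h^{1/2}$, in particular that the resulting action on the de Rham and crystalline realizations really factors through the correct two-fold cover, rather than through some larger Galois group of the imperfect Robba ring. A descent argument along the Frobenius quotient, combined with the compatibility between the perfectoid and formal pictures established in Sections 1.2 and 2.2, should suffice, but this is the point where a fully rigorous verification requires transporting the full strength of \cite{FS} to the mixed-parity prismatized setting via the constructions developed in this paper.
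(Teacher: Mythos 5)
Your proposal and the paper's proof diverge at the final and most important step, namely how to obtain a $\overline{\mathbb{Q}}_p$-coefficient Langlands parametrization. You propose running the Fargues--Scholze excursion operator machinery directly with $p$-adic, mixed-parity coefficients on the prismatized categories, and you correctly identify the serious obstacle: establishing compatibility of the excursion formalism with the prismatization and with the adjoined element $h^{1/2}$, i.e.\ a $p$-adic mixed-parity analogue of geometric Satake and excursion operators. The paper does not attempt this. Its key device, which your proposal omits entirely, is the abstract and non-canonical field isomorphism $\overline{\mathbb{Q}}_p \overset{\sim}{\longrightarrow} \overline{\mathbb{Q}}_\ell$. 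The paper transports the already-established $\ell$-adic parametrization of \cite{FS} along this isomorphism to obtain Weil-group-equivariant objects with $\overline{\mathbb{Q}}_p$-values, feeds these through the Hecke operator to get a motivic complex, and then lands in the generalized prismatized categories simply by taking the condensed tensor product with the prismatization. In effect the paper treats \cite{FS} as a black box and transfers the result, whereas you are proposing to rebuild it in the new setting.

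Each route has a clear trade-off. The paper's transfer is short and immediately sufficient for the statement as written (which only asserts existence of a parametrization after base change to $\overline{\mathbb{Q}}_p$), at the cost of producing a parametrization that depends on the chosen isomorphism $\overline{\mathbb{Q}}_p \cong \overline{\mathbb{Q}}_\ell$ and hence is not canonical. Your route, if it could be completed, would give an intrinsic parametrization canonically attached to the mixed-parity prismatized categories, but it requires precisely the hard verification you flag at the end, and as it stands that step is a gap rather than a proof. If you want your argument to close, the simplest fix is to import the paper's device: invoke the isomorphism $\overline{\mathbb{Q}}_p \cong \overline{\mathbb{Q}}_\ell$ to reduce to the $\ell$-adic statement of \cite{FS} and then apply the condensed tensor with the prismatization, reserving the direct excursion-operator construction as a desideratum rather than a claimed proof.
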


\begin{proof}
See the proof in \cite[Chapter VIII, IX]{FS}, \cite{VL}, \cite{TIV}, \cite{TV}, \cite{TVI}. Here the key consideration comes from reaching a corresponding correspondence in between the $p$-adic coefficients and the $\ell$-adic coefficients through the algebraic isomorphism:
\begin{align}
\overline{\mathbb{Q}}_p \overset{\sim}{\longrightarrow} \overline{\mathbb{Q}}_\ell.
\end{align}
From this isomorphism we start from the corresponding representation of $p$-adic points ($\overline{\mathbb{Q}}_p$-values) of the full Langlands dual group (with the action from the Weil group), then we can end up with the corresponding representation of the corresponding representation of $\ell$-adic points ($\overline{\mathbb{Q}}_\ell$-values) of the full Langlands dual group (with the action from the Weil group), which produces a motivic complex through the Hecke operator, which then produces a motivic complex with $\overline{\mathbb{Q}}_p$ coefficients. This will then be sent to the categories in our generalized setting by taking the condensed product with the prismatization directly.
\end{proof}

\begin{definition}
We use the following to define the solid geometric generalized Banach space representation of $G(T)$:
\begin{align}
\mathrm{Quasicoherent}^\sharp_{\mathrm{\Gamma}_{\mathrm{prismatization},\mathrm{\Gamma}_{\mathrm{Bun},h^{1/2}}(.)^G,2}}=\lim_{F}\projlim_i \mathrm{Quasicoherent}^\sharp_{\mathrm{Spec}U_i[h^{1/2}]},
\end{align}
\begin{align}
\mathrm{Quasicoherent}^\sharp_{\mathrm{\Gamma}_{\mathrm{prismatization},\mathrm{deRham},\mathrm{\Gamma}_{\mathrm{Bun},h^{1/2}}(.)^G,2}}=\lim_{F}\projlim_i \mathrm{Quasicoherent}^\sharp_{\mathrm{Spec}U_i[1/\eta]_{I_{U_i}}[h^{1/2}]},
\end{align}
\begin{align}
\mathrm{Quasicoherent}^\sharp_{\mathrm{\Gamma}_{\mathrm{prismatization},\mathrm{cristalline},\mathrm{\Gamma}_{\mathrm{Bun},h^{1/2}}(.)^G,2}}=\lim_{F}\projlim_i \mathrm{Quasicoherent}^\sharp_{\mathrm{Spec}U_i[1/\eta][h^{1/2}]}.
\end{align}
\end{definition}

\begin{remark}
In some situation, the coverings of the Galois groups after \cite{BSB} will be trivial such as splitting as a pure product with the group scheme $\mu_n$, but this does not matter since on the other hand the Hodge structure is already generalized by adding the roots of the $p$-adic or $z$-adic '2$\pi$i'.
\end{remark}

\newpage
\subsection*{Acknowledgements}
Being far beyond the original (with further originality from previous work of Breuil and Schneider) perspective from Professor Sorensen, we express our thankfulness to Professor Sorensen for sharing understanding on the Breuil-Schneider conjecture and the philosophy. We also express our thankfulness to Professor Kedlaya for sharing understanding on the philosophy of Witt vectorial approach to the $p$-adic motives. This paper is profoundly influenced by previous philosophicalization from Professor Kedlaya on Witt vectors and the applications to motives.

\newpage

\end{document}